\documentclass[british, 12pt, reqno]{amsart}
\usepackage[T1]{fontenc}
\usepackage[latin9]{inputenc}
\usepackage{amsmath, amsthm, amssymb, stmaryrd}
\usepackage{mathrsfs}
\usepackage{geometry}
\usepackage{mathtools}

\usepackage{amsmath}
\usepackage{amsthm}
\usepackage{amssymb}
\usepackage{xcolor}
\usepackage{dsfont}

\usepackage{cancel}
\usepackage{soul}
\usepackage{comment}
\usepackage{color}
\usepackage{amscd}
\usepackage{tabularx}
\usepackage{url}
\usepackage{eurosym}
\usepackage{braket}
\usepackage
{hyperref}
\usepackage{enumerate}
\usepackage{graphicx}

\makeatletter
\def\widebreve{\mathpalette\wide@breve}
\def\wide@breve#1#2{\sbox\z@{$#1#2$}%
     \mathop{\vbox{\m@th\ialign{##\crcr
\kern0.08em\brevefill#1{0.8\wd\z@}\crcr\noalign{\nointerlineskip}%
                    $\hss#1#2\hss$\crcr}}}\limits}
\def\brevefill#1#2{$\m@th\sbox\tw@{$#1($}%
  \hss\resizebox{#2}{\wd\tw@}{\rotatebox[origin=c]{90}{\upshape(}}\hss$}
\makeatletter

\makeatletter
\numberwithin{equation}{section}
\numberwithin{figure}{section}
\theoremstyle{plain}
\newtheorem{thm}{\protect\theoremname}[section]
\theoremstyle{plain}

\ifx\proof\undefined
\newenvironment{proof}[1][\protect\proofname]{\par
	\normalfont\topsep6\p@\@plus6\p@\relax
	\trivlist
	\itemindent\parindent
	\item[\hskip\labelsep\scshape #1]\ignorespaces
}{%
	\endtrivlist\@endpefalse
}
\providecommand{\proofname}{Proof}
\fi
\theoremstyle{remark}
\newtheorem{rem}[thm]{\protect\remarkname}
\theoremstyle{plain}
\newtheorem{lem}[thm]{\protect\lemmaname}

\makeatother

\usepackage{babel}
\providecommand{\corollaryname}{Corollary}
\providecommand{\lemmaname}{Lemma}
\providecommand{\remarkname}{Remark}
\providecommand{\theoremname}{Theorem}

\numberwithin{equation}{section}
\numberwithin{figure}{section}
\theoremstyle{plain}

\newtheorem{conj}[thm]{\protect Conjecture}

\def\p{{\textcolor{red} {p}}}
\def\1{{\textcolor{red} {1}}}
\def\d1{{\textcolor{red} {d-1}}}

\def \bN {\mathbb N}

\def \bQ {\mathbb Q}

\def \bR {\mathbb R}

\def \bZ {\mathbb Z}

\def \bl {{\boldsymbol \ell}}

\def \cB {\mathcal B}

\def \cD {\mathcal D}

\def \le {\leqslant}

\def \ge {\geqslant}

\def \d {{\mathrm{d}}}

\def \ds1 {\mathds{1}}

\def \alp {{\alpha}}
\def \bet {{\beta}}
\def \gam {{\gamma}}
\def \del {{\delta}}

\makeatother

\usepackage{babel}
\providecommand{\lemmaname}{Lemma}

\providecommand{\theoremname}{Theorem}
\usepackage{xifthen,ifthen}
\makeatletter
\newcounter{@ToDo}
\newcommand{\todo@helper}[1]{%
	({\color{blue}TODO~\arabic{@ToDo}: {#1\@addpunct{.}}})%
}
\newcommand{\todo}[1]{\stepcounter{@ToDo}%
	\relax\ifmmode\text{\todo@helper{#1}}%
	\else\todo@helper{#1}\fi%
}
\newcounter{@cdo}
\newcommand{\cdo@helper}[1]{%
	({\color{red}CITE~\arabic{@cdo}: {#1\@addpunct{.}}})%
}
\newcommand{\cdo}[1]{\stepcounter{@cdo}%
	\relax\ifmmode\text{\cdo@helper{#1}}%
	\else\cdo@helper{#1}\fi%
}
\newcommand{\mmod}[1]{\,\,\mathrm{mod}\,\,#1}

\def \ds1 {\mathds{1}}

\def \alp {{\alpha}}
\def \bet {{\beta}}
\def \gam {{\gamma}}
\def \del {{\delta}}

\def \epsilon {{\varepsilon}}

\begin{document}

\author{Sam Chow \and Owen Jones}

\address{Mathematics Institute, Zeeman Building, University of Warwick, Coventry CV4 7AL, United Kingdom}
\email{sam.chow@warwick.ac.uk}

\address{Mathematics Institute, Zeeman Building, University of Warwick, Coventry CV4 7AL, United Kingdom}
\email{Owen.S.Jones@warwick.ac.uk}

\title{A note on the $p$-adic inhomogeneous Littlewood conjecture}

\subjclass[2020]{11J83 (primary); 11K41 (secondary)}

\keywords{metric diophantine approximation}

\begin{abstract} 
Bugeaud, Haynes and Velani (2011) resolved a metric version of the $p$-adic Littlewood conjecture. We establish a fully inhomogeneous generalisation of their result, as well as a partially inhomogeneous generalisation of a result of Harrap and Haynes (2013) in the more general mixed setting.
\end{abstract}

\maketitle

\section{Introduction}

\subsection{Versions of Littlewood's conjecture}

The Littlewood conjecture is a famous open problem in  diophantine approximation.

\begin{conj}[Littlewood circa 1930]
For all $\alpha,\beta \in \mathbb{R}$,
\[
\liminf_{n\to \infty} n \|n\alpha\| \cdot \|n\beta\|=0.
\]
\end{conj}

As far as we are aware, the following natural inhomogeneous analogue was first stated by Bugeaud~\cite{Bug2014}, albeit as a problem rather than a conjecture. 

\begin{rem}
A very similar question was posed by Cassels~\cite[p. 307]{Cas1997}. That problem was solved by Shapira \cite{Sha2011}, and its $p$-adic analogue was very recently solved \cite{AGVY}.
\end{rem}

\begin{conj}
[Inhomogeneous Littlewood]
Let $\alpha,\beta \in \mathbb{R}$ be such that $1,\alpha$ and $\beta$ are linearly independent over $\mathbb{Q}$. Then, for all $\gam, \del \in \bR$,
\[
\liminf_{n \to \infty} n\|n\alpha-\gam\| \cdot \|n\beta-\del\|=0.
\]
\end{conj}

\begin{rem}
This generalises Littlewood's conjecture, since the latter clearly holds when $1, \alp, \bet$ are linearly dependent over $\bQ$.
\end{rem}

In 2004, de Mathan and Teuli\'e~\cite{dMT} posed the following problem analogous to Littlewood's conjecture, wherein $\|n\beta\|$ is replaced by a $p$-adic absolute value $|n|_p$ of $n$, given by $p^{-k}$ if $p^k \| n$.

\begin{conj}[$p$-adic Littlewood]
Let $\alpha \in \mathbb{R}$ be irrational and $p$ prime. Then 
\[
\liminf_{n \to \infty} n |n|_p \|n\alpha\| = 0.
\]
\end{conj}

Combining the inhomogeneous and $p$-adic aspects gives rise to the following assertion.

\begin{conj}[$p$-adic inhomogeneous Littlewood]
Let $\alpha \in \mathbb{R}$ be irrational and $p$ prime. Then, for all $\gamma\in\mathbb{R}$ and $n_0 \in \mathbb{Z}$, 
\[
\liminf_{n \to \infty}
n|n-n_0|_p \|n\alpha-\gamma\|=0.
    \]
\end{conj}

In 1962, Gallagher~\cite{Gal1962} proved that a stronger form of the Littlewood conjecture holds almost everywhere.

\begin{thm}[Gallagher 1962]
    Let $\psi:\mathbb{N}\to[0,\infty)$ be non-increasing and $k \in \mathbb{N}$.
    If 
    \[
    \sum_{n=1}^{\infty}\psi(n)(\log n)^{k-1}=\infty
    \]
    then, for almost all $(\alp_1,\ldots, \alpha_k) \in \bR^k$, there exist infinitely many $n \in \bN$ such that
\begin{equation}
\label{GallagherConc}
\| n \alp_1 \| \cdots \| n \alp_k \| < \psi(n).
\end{equation}
\end{thm}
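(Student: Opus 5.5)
The plan is to derive Gallagher's theorem from the divergence Borel--Cantelli lemma together with a zero--one law. By periodicity we may work on the torus $\mathbb{T}^k=(\bR/\bZ)^k$. For each $n$ set
\[
E_n=\{\boldsymbol\alpha\in \mathbb{T}^k : \|n\alp_1\|\cdots\|n\alp_k\|<\psi(n)\}.
\]
The conclusion is that $\limsup E_n$ has full measure.

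First I would make some normalisations using monotonicity.
\begin{enumerate}[(i)]
\item Replacing $\psi$ by $\min\{\psi(n),1/n\}$ preserves both monotonicity and divergence. The only exception is the case $\psi(n)\ge 1/n$ infinitely often; there monotonicity gives $\psi\gg 1/n$ on long ranges, and the result then follows from the homogeneous Dirichlet-type statement for $k=1$.
\item Replacing $\psi$ by $\max\{\psi(n),1/(n(\log n)^{k+1})\}$ changes the conclusion only on a null set. The added terms have convergent measure sum, so the convergence Borel--Cantelli lemma applies to them.
\end{enumerate}
Together these give $\log(1/\psi(n))\asymp \log n$.

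The map $\boldsymbol\alpha\mapsto n\boldsymbol\alpha$ preserves Haar measure. Hence $|E_n|=V_k(\psi(n))$, the volume of $\{\mathbf y\in[-\tfrac12,\tfrac12]^k:|y_1\cdots y_k|<\psi\}$, which is $\asymp \psi(n)(\log(1/\psi(n)))^{k-1}\asymp\psi(n)(\log n)^{k-1}$. So $\sum|E_n|=\infty$.

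Second, I would establish quasi-independence on average:
\[
\sum_{m,n\le N}|E_m\cap E_n|\le C\Bigl(\sum_{n\le N}|E_n|\Bigr)^2
\]
for infinitely many $N$. The Chung--Erd\H{o}s inequality then gives $|\limsup E_n|\ge 1/C>0$.

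To estimate the overlaps, let $f_\psi$ be the indicator of the hyperbolic region on $\mathbb{T}^k$, so $\ds1_{E_n}(\boldsymbol\alpha)=f_{\psi(n)}(n\boldsymbol\alpha)$. Expanding in Fourier series gives
\[
|E_m\cap E_n|=\sum_{m\mathbf a=n\mathbf b}\hat f_{\psi(m)}(\mathbf a)\overline{\hat f_{\psi(n)}(\mathbf b)}.
\]
Writing $d=\gcd(m,n)$, the constraint forces $\mathbf a=(n/d)\mathbf c$ and $\mathbf b=(m/d)\mathbf c$ for some $\mathbf c\in\bZ^k$. The term $\mathbf c=0$ contributes $|E_m||E_n|$. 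The remaining terms I would bound coordinatewise, after first replacing $f$ by a product-type majorant: cover the hyperbolic region by $O(\log(1/\psi))^{k-1}$ dyadic boxes and use smoothed box indicators. This should give an error of the shape
\[
\ll (\log n)^{O(k)}\min\{\psi(m),\psi(n)\}\cdot\frac{d}{\max(m,n)}.
\]
One could alternatively handle this term directly, by counting lattice-type coincidences as in Gallagher's original argument.

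The hardest step is then summing this error over $m<n\le N$ so that it is $\ll(\sum|E_n|)^2$. I would split according to $d=\gcd(m,n)$ and use $\sum_{m<n}\gcd(m,n)/n\ll N\log N$, together with monotonicity of $\psi$. The aim is to dominate the error by $\sum_n\psi(n)(\log n)^{k-1}\cdot\sum_{m\le n}\psi(m)(\log m)^{k-1}$. The loss in the logarithm powers coming from the box decomposition is the danger here. If it is too large, I would first restrict to $n$ in sparse dyadic blocks $[2^j,2^{j+1})$, chosen so that the divergent sum is preserved.

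Finally, I would upgrade positive measure to full measure by a zero--one law. The set $\limsup E_n$ is, up to null sets, invariant under $\boldsymbol\alpha\mapsto p\boldsymbol\alpha$ for a prime $p$, after a standard perturbation of $\psi$ by a constant factor, which is harmless since $\psi$ and $c\psi$ behave the same up to measure. Ergodicity of $\times p$ on $\mathbb{T}^k$ then gives measure $1$, which is Gallagher's form of Cassels' zero--one law.
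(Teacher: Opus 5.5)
The paper gives no proof of this statement; it quotes Gallagher's 1962 result as background, so your proposal has to stand on its own. Your normalisations, the volume computation and the Chung--Erd\H{o}s framework are fine. The quasi-independence step, where all the difficulty lies, fails as planned.

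Already for $k=1$ (Khintchine's theorem, which the statement includes), the non-reduced sets $E_n$ are not quasi-independent on average, so no sharpening of your Fourier bound can help. If $m<n$ and $d=\gcd(m,n)$, every $\boldsymbol\alpha$ whose first coordinate lies within $\psi(n)/n$ of a point $a/d$ belongs to $E_m\cap E_n$. Hence $|E_m\cap E_n|\ge 2d\psi(n)/n$, and
\[
\sum_{m<n\le N}|E_m\cap E_n| \ge 2\sum_{n\le N}\psi(n)\Bigl(\sum_{e\mid n}\frac{\varphi(e)}{e}-1\Bigr) \gg \sum_{n\le N}\psi(n)\log n .
\]
For $\psi(n)=1/(n\log n)$ the right-hand side is $\gg \log N$, whereas $(\sum_{n\le N}|E_n|)^2\asymp(\log\log N)^2$. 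These common-denominator correlations are the reason Khintchine's theorem is normally proved with reduced fractions or with unions over dyadic blocks.

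For $k\ge 2$ this particular lower bound is harmless, but your upper bound is far too lossy. A per-pair error of $(\log n)^{L}\psi(n)\gcd(m,n)/n$ for $m<n$ sums, after partial summation, to $\asymp\sum_{n\le N}\psi(n)(\log n)^{L+1}$. Take the admissible choice $\psi(n)=1/(n(\log n)^k\log\log n)$, for which $\sum_{n\le N}|E_n|\asymp\log\log\log N$: it shows you need $L\le k-2$. Counting all pairs of dyadic boxes gives $L=2k-2$.

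Restricting to sparse dyadic blocks cannot repair this, because the excess comes from pairs inside a single block. Your bound already assigns $\psi(n)(\log n)^{2k-2}\varphi(n)/n$ in total to the coprime $m<n$, for each $n$. So within the $j$-th block the bound exceeds the block's mass, which is $\asymp 1/(j\log j)$ for the test function above, by a positive power of $j$, whichever blocks you keep.

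What is missing is a genuinely sharp overlap estimate. You would need to show that only pairs of boxes that are thin in the same coordinate, and close to the thinnest possible, contribute. The logarithmic loss would then involve $\log\frac{n\psi(m)}{m\psi(n)}$ rather than $\log n$, and you would still have to control that factor using monotonicity. Alternatively, you could work with unions over blocks. Or you could absorb $\prod_{i\ge 2}\|n\alpha_i\|$ into a non-monotone $\Psi$ and apply a one-dimensional theorem with $\varphi(n)/n$ weights, as the paper does in the $p$-adic setting; the almost-sure divergence of the resulting series then becomes the main task.

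Separately, your $\times p$ invariance presupposes $|W(c\psi)|=|W(\psi)|$ for $W(\psi)=\limsup E_n$, which is a nontrivial Cassels-type lemma in the multiplicative setting. It is easier to apply ergodicity to $\bigcap_{j}W(\psi/j)$, which $\boldsymbol\alpha\mapsto p\boldsymbol\alpha$ maps into itself, using that your positive-measure bound is uniform under rescaling of $\psi$.
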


Almost half a century later, the natural $p$-adic analogue of this was proven.

\begin{thm} [Bugeaud--Haynes--Velani 2011, $p$-adic Gallagher]
\label{BHVthm}
Let $p_1, \ldots, p_k$ be distinct primes and $\psi: \bN \to [0,\infty)$ be a non-increasing function such that 
\[
\sum_{n=1}^\infty \psi(n) (\log n)^k = \infty.
\]
Then, for almost all $\alp \in \bR$, there exist infinitely many $n \in \bN$ such that
\begin{equation}
\label{MainIneq}
|n|_{p_1} \cdots |n|_{p_k}
\| n \alp \| < \psi(n).
\end{equation}
\end{thm}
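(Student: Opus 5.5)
A natural plan is to reduce Theorem~\ref{BHVthm} to a one-dimensional Khintchine-type statement with a non-monotonic approximating function, and then apply a zero--one law together with a second-moment (quasi-independence) argument. For $n \in \bN$ write $w(n) = |n|_{p_1}\cdots|n|_{p_k}$, and define
\[
\Psi(n) = \frac{\psi(n)}{w(n)} .
\]
Then \eqref{MainIneq} is the same as $\|n\alp\| < \Psi(n)$. It therefore suffices to show that, for almost every $\alp$ in $[0,1)$, this inequality has infinitely many solutions $n$. We may assume $\psi(n) \le 1/n$: otherwise replace $\psi$ by $\min(\psi(n),1/n)$. The new function is still non-increasing, the sum is still divergent, and solutions with $\|n\alp\|<1/n$ already occur infinitely often by Dirichlet.

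The first step is to reorganise the sum according to the $p$-part of $n$. Write $n = p_1^{a_1}\cdots p_k^{a_k} m$ with $\gcd(m,p_1\cdots p_k)=1$, so that $\Psi(n) = p_1^{a_1}\cdots p_k^{a_k}\,\psi(n)$. Counting the $n \le N$ with $w(n)^{-1} = P$ gives about $N/P$ of them, weighted by $P$. The number of possible $P \le N$ is about $(\log N)^k$. Partial summation with $\psi$ non-increasing then gives
\[
\sum_{n \le N} \Psi(n) \gg \sum_{n\le N}\psi(n)(\log n)^k \cdot \frac{1}{\log n}\cdot\log n,
\]
that is, $\sum \Psi(n)$ diverges precisely under the hypothesis. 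One loses nothing essential by restricting to $m$ coprime to $p_1\cdots p_k$ with a density factor $\prod(1-1/p_i)$.

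The second step concerns the limsup set of $A_n = \{\alp : \|n\alp\|<\Psi(n)\}$. I would use the Duffin--Schaeffer framework: replace $A_n$ by its coprime version $A_n'$, namely $\alp$ within $\Psi(n)/n$ of some $a/n$ with $\gcd(a,n)=1$. By Gallagher's zero--one law, the limsup of the $A_n'$ has measure $0$ or $1$. Now $\Psi$ is large exactly on multiples of high prime powers, where $\varphi(n)/n$ is comparable to a constant depending only on $p_1,\dots,p_k$. Hence
\[
\sum \varphi(n)\Psi(n)/n = \infty .
\]
The Duffin--Schaeffer conjecture (Koukoulopoulos--Maynard) then gives full measure immediately. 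For a self-contained route, one would instead verify the quasi-independence bound
\[
\sum_{m,n\le N}|A_m'\cap A_n'| \ll \Bigl(\sum_{n\le N}|A_n'|\Bigr)^2
\]
via the Pollington--Vaughan overlap estimate.

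The main obstacle is this overlap estimate. Pairs $m,n$ sharing large powers of the $p_i$ have large $\gcd$, which is exactly where overlaps are big. I would control these pairs by splitting according to the $p$-adic valuations $(a_i)$ and $(b_i)$. Within a fixed block, $\Psi$ is essentially a monotone function times a constant, and Gallagher/Khintchine-type variance bounds apply. Across blocks, I would sum the error terms $\prod_{p\mid \gcd}(1+1/p)$, which is bounded since only finitely many primes inflate $\Psi$. The monotonicity of $\psi$ is used here to compare $\psi(m)$ and $\psi(n)$ on dyadic ranges.
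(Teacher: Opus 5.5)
Your route is the paper's. The statement is the case $n_1=\cdots=n_k=0$, $\gamma=0$ of Theorem~\ref{MainThm}. The paper proves that theorem by applying a Duffin--Schaeffer-type result (Theorem~\ref{WeakInhomDuff}; for $\gamma=0$ the Koukoulopoulos--Maynard theorem suffices) to $\Psi(n)=\psi(n)/(|n|_{p_1}\cdots|n|_{p_k})$. All of the work then lies in proving $\sum_n\varphi(n)\Psi(n)/n=\infty$, and this is exactly the step your proposal does not establish.

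Your justification is that $\varphi(n)/n$ is comparable to a constant on the integers where $\Psi$ is large. This is false. Write $n=Pm$ with $P=p_1^{a_1}\cdots p_k^{a_k}$, $q=p_1\cdots p_k$ and $\gcd(m,q)=1$. The cofactor $m$ is unrestricted, and $\varphi(n)/n=(\varphi(P)/P)\,\varphi(m)/m$ tends to $0$ along suitable $m$ (for instance, products of the first primes not dividing $q$). Your first step does not rescue this, since it only shows $\sum_n\Psi(n)=\infty$. Without the totient weight that is not the hypothesis of the Duffin--Schaeffer theorem, and for non-monotonic $\Psi$ it does not suffice on its own.

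\textbf{The missing ingredient.} What you need is an \emph{averaged} lower bound. For large $N$ and each $P$ as above with all $a_i\ge 1$ and $P\le N^{1/2}$, one needs $\sum\varphi(n)/n\gg N/P$, the sum running over $n\in(N,2N]$ with $p_i^{a_i}\,\|\,n$ for all $i$. This is the lemma in Section~\ref{proof}, proved there by AM--GM applied to the product formula for $\varphi(n)/n$.

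In the homogeneous case you can argue more simply:
\begin{itemize}
\item These $n$ are exactly $Pm$ with $m\in(N/P,2N/P]$ and $\gcd(m,q)=1$.
\item For them, $\varphi(Pm)/(Pm)\ge(\varphi(q)/q)\,\varphi(m)/m$.
\item For large $M$, $\sum_{M<m\le 2M,\ \gcd(m,q)=1}\varphi(m)/m\gg M$; this follows by writing $\varphi(m)/m=\sum_{d\mid m}\mu(d)/d$ and interchanging sums.
\end{itemize}
Combine this with $\psi(n)\ge\psi(2N)$ on $(N,2N]$ and the $\gg(\log N)^k$ admissible values of $P$. This gives $\sum_{N<n\le 2N}\varphi(n)\Psi(n)/n\gg N\psi(2N)(\log N)^k$, and Cauchy condensation finishes.

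\textbf{The `self-contained' alternative.} This does not work as sketched. The Pollington--Vaughan error factor is a product over primes dividing $mn/\gcd(m,n)^2$ above a threshold, so it runs over arbitrary primes. The fact that only $p_1,\dots,p_k$ inflate $\Psi$ says nothing about it. Controlling that factor on average is precisely the difficulty Koukoulopoulos and Maynard overcame. If you want to avoid their theorem, the classical 1941 Duffin--Schaeffer theorem would do. But its hypothesis, $\sum_{n\le N}\varphi(n)\Psi(n)/n\gg\sum_{n\le N}\Psi(n)$, again rests on the averaged totient bound above.
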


Subsequently, Chow and Technau established the natural inhomogeneous generalisation of Gallagher's theorem in \cite{CT2024}.
We establish the following `fully inhomogeneous' generalisation of Theorem \ref{BHVthm}.

\begin{thm}
[Main theorem,  $p$-adic inhomogeneous Gallagher]
\label{MainThm}
Let $p_1, \ldots, p_k$ be distinct primes. Let $n_1, \ldots, n_k \in \bZ$ and $\gam \in \bR$. Let $\psi: \bN \to [0,\infty)$ be a non-increasing function such that 
\[
\sum_{n=1}^\infty \psi(n) (\log n)^k = \infty.
\]
Then, for almost all $\alp \in \bR$, there exist infinitely many $n \in \bN$ such that
\begin{equation}
\label{MainIneq}
|n - n_1|_{p_1} \cdots |n - n_k|_{p_k}
\| n \alp - \gam \| < \psi(n).
\end{equation}
\end{thm}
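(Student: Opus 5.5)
Following Bugeaud--Haynes--Velani, we will build a limsup set of $\alpha$ from a sparse set of integers $n$ and prove it has full measure using a quasi-independence-on-average estimate. We treat the $p$-adic factors combinatorially. By the Chinese remainder theorem, for each vector $(a_1,\ldots,a_k)$ of nonnegative integers the set
\[
\{n\in\bN : p_i^{a_i} \mid n-n_i \ (1\le i\le k)\}
\]
is a single residue class $r_{\mathbf a}$ modulo $q_{\mathbf a}=p_1^{a_1}\cdots p_k^{a_k}$. For such $n$ we have $|n-n_1|_{p_1}\cdots|n-n_k|_{p_k}\le q_{\mathbf a}^{-1}$. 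It therefore suffices to show that, for almost all $\alpha$, infinitely many $n$ satisfy
\[
\|n\alpha-\gamma\|<q_{\mathbf a}\psi(n) \quad\text{with } n\equiv r_{\mathbf a}\ (\mathrm{mod}\ q_{\mathbf a}).
\]

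\textbf{Dyadic decomposition and set construction.} Work on dyadic blocks $N=2^t\le n<2^{t+1}$, on which $\psi(n)\asymp\psi(2^t)$ by monotonicity (after the standard reduction $\psi(n)\le 1/n$ or similar, which we justify by noting that larger $\psi$ only help, truncating via $\min$). In each block, consider all $\mathbf a$ with $q_{\mathbf a}\le N^{1/2}$, say. The number of such $\mathbf a$ is $\asymp(\log N)^k$. For each such $\mathbf a$, define
\[
E_{t,\mathbf a}=\bigcup_{\substack{N\le n<2N\\ n\equiv r_{\mathbf a}\ (q_{\mathbf a})}}\{\alpha\in[0,1]:\|n\alpha-\gamma\|<q_{\mathbf a}\psi(2N)\},
\]
and take $E_t=\bigcup_{\mathbf a}E_{t,\mathbf a}$. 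We may restrict to exact valuations to make the $\mathbf a$-pieces disjoint in $n$; this avoids double counting. Each $n$-interval has measure $\asymp q_{\mathbf a}\psi(N)$, and there are $\asymp N/q_{\mathbf a}$ values of $n$. So, modulo overlaps, the expected measure of $E_t$ is $\asymp N\psi(N)(\log N)^k$. The divergence hypothesis combined with Cauchy condensation gives $\sum_t 2^t\psi(2^t)t^k=\infty$. If the measures are too large, cap $\psi$ so that $\sum_t\mu(E_t)$ still diverges while each $\mu(E_t)\le 1$.

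\textbf{Quasi-independence.} We apply the divergence Borel--Cantelli lemma (Chung--Erd\H{o}s form), which yields positive measure for $\limsup E_t$. This requires
\[
\sum_{s,t\le T}\mu(E_s\cap E_t)\ll\Bigl(\sum_{t\le T}\mu(E_t)\Bigr)^2.
\]
Expand the intersections into pairs $(n,m)$. For fixed $n\neq m$, the measure of $\{\|n\alpha-\gamma\|<\delta,\ \|m\alpha-\gamma\|<\epsilon\}$ is at most
\[
4\delta\epsilon+O\bigl(\gcd(n,m)\min(\delta/n,\epsilon/m)\bigr).
\]
This is the standard inhomogeneous overlap estimate; the inhomogeneous shift $\gamma$ only moves intervals and does not change this bound. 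The main term then reproduces the product of measures. The key step, and the main obstacle, is controlling the gcd error term summed over $n\equiv r_{\mathbf a}\ (\mathrm{mod}\ q_{\mathbf a})$ and $m\equiv r_{\mathbf b}\ (\mathrm{mod}\ q_{\mathbf b})$. In the homogeneous case, $n_i=0$ and $n,m$ share the forced factors $p_i^{\min(a_i,b_i)}$, which BHV handle explicitly. Here the shifts change the arithmetic. We will bound $\sum\gcd(n,m)$ over progressions by writing it as $\sum_{d}\varphi(d)\,\#\{(n,m): d\mid n,\ d\mid m\}$. For $d$ coprime to $q_{\mathbf a}q_{\mathbf b}$, the count is about $(N/(q_{\mathbf a}d))(M/(q_{\mathbf b}d))$. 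The $p_i$-parts of $d$ are constrained: $p_i^j\mid n$ together with $n\equiv n_i\ (\mathrm{mod}\ p_i^{a_i})$ forces $j\le v_{p_i}(n_i)$ whenever $a_i>v_{p_i}(n_i)$, and this is only a bounded amount since $n_i$ is fixed. This should make the inhomogeneous case no harder than BHV up to constants depending on the $n_i$. Summing gives error $\ll\sum N\psi\cdot(\log)^{O(1)}$ terms, which are dominated once $\sum\mu(E_t)$ diverges.

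\textbf{Upgrading to full measure.} A positive-measure limsup set is not yet enough, so we upgrade via a zero-one law. Either run the argument locally on any interval $I$, obtaining $\mu(\limsup E_t\cap I)\gg\mu(I)$ with the same proof, and then apply Lebesgue density; or invoke a Cassels/Gallagher-type zero-one law adapted to inhomogeneous approximation along residue classes. We will prefer the local version, since the overlap estimates are uniform in subintervals. This yields full measure, and hence the theorem.
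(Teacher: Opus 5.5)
\paragraph{The gap: quasi-independence.} Your plan fails at the quasi-independence step. You flag it as the crux but do not control it, and two things go wrong.

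First, the accounting. Chung--Erd\H{o}s needs $\sum_{s,t\le T}\mu(E_s\cap E_t)\ll(\sum_{t\le T}\mu(E_t))^2$. An error of size $\sum_t t^{c}\mu(E_t)$ with $c>0$ is \emph{not} dominated by this when the series diverges slowly, e.g.\ $\mu(E_t)\asymp 1/t$, which occurs for $\psi(n)=1/(n(\log n)^{k+1})$. The prime-to-$q_{\mathbf a}q_{\mathbf b}$ part of $\gcd(n,m)$ already contributes $\sum_d \varphi(d)/d^2\asymp \log N$. So your claim that errors of size $N\psi\cdot(\log)^{O(1)}$ are harmless is false: that logarithm must be cancelled exactly by the averaging over $(\mathbf a,\mathbf b)$. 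Your remark that the $p_i$-part of $\gcd(n,m)$ is bounded is what could make this cancellation happen, but it uses $n_i\neq 0$. For $n_i=0$ the condition $j\le v_{p_i}(n_i)$ is vacuous.

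Second, and fatally, pairwise overlaps cannot handle $n_1=\cdots=n_k=0$ with, say, $\gamma=1/2$. This case is allowed by the theorem and not covered by Bugeaud--Haynes--Velani, who have $\gamma=0$. Take $k=1$, $p_1=2$, $n_1=0$, $\gamma=1/2$, and $n=2^a n'$, $m=2^a m'$ ($n',m'$ odd) in the same class. The centres $(2c+1)/(2n)$ and $(2d+1)/(2m)$ coincide for exactly $\gcd(n,m)$ pairs $(c,d)$. Hence $\mu(A_n\cap A_m)\ge 2\gcd(n,m)\min(\delta/n,\epsilon/m)$ holds as a genuine \emph{lower} bound. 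Summing over one class with $2^a\le\sqrt N$ gives $\gg N\psi(2N)\log N$. Over the $\asymp\log N$ classes the within-block second moment is therefore $\gg N\psi(2N)(\log N)^2$, a factor $\log N$ larger than $\sum_n\mu(A_n)\asymp N\psi(2N)\log N$. Consequently Chung--Erd\H{o}s on the $A_n$, or Cauchy--Schwarz to bound $\mu(E_t)$ from below, loses a logarithm per block, and with $\mu(E_t)\asymp 1/t$ this yields nothing. Sharper gcd counting cannot help, because the overcounting is real. Homogeneously one removes it by passing to reduced fractions, but that device is exactly what fails inhomogeneously: the inhomogeneous Duffin--Schaeffer conjecture is false.

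\paragraph{What the paper does instead.} This is why the paper does no overlap analysis at all. It invokes the weak inhomogeneous Duffin--Schaeffer theorem (Theorem~\ref{WeakInhomDuff}) with $\Psi(n)=\psi(n)/\prod_j|n-n_j|_{p_j}$, which treats every $\gamma$ uniformly. It then only has to verify $\sum_n\varphi(n)\Psi(n)/n=\infty$, which follows from the AM--GM totient-averaging lemma on the sets $B_{\bl}(N)$.
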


\begin{rem} If the series converges then, for almost all $\alp \in \bR$, the inequality \eqref{MainIneq} has at most finitely many solutions. This follows from the same routine calculation as in the homogeneous setting \cite{BHV2011}.
\end{rem}

\subsection{Methods}

The Duffin--Schaeffer conjecture \cite{DS1941} was settled by Koukoulopoulos and Maynard \cite{KM2020} in a major breakthrough. Very recently, the natural inhomogeneous analogue was disproved by counterexamples \cite{HMP, HL}. Even more recently, the so-called weak inhomogeneous Duffin--Schaeffer \cite{CT2024} was solved by OpenAI as part of their anthology of over 700 mathematical breakthroughs \cite{OAI}. We now state this, writing $\varphi$ for the Euler totient function. 

\begin{thm} [OpenAI 2026+, weak inhomogeneous Duffin--Schaeffer conjecture]
\label{WeakInhomDuff}
Let $\gamma\in \bR$, and let $\Psi:\bN \to [0,\infty)$ be such that 
\begin{equation}
\label{DivReq}
\sum_{n=1}^\infty \frac{\varphi(n)}{n}\Psi(n)=\infty.
\end{equation}
Then, for almost every $\alpha \in \mathbb{R}$, there exist infinitely many $n\in\bN$ such that 
\[
\|n \alpha - \gamma \| <\Psi(n).
\]
\end{thm}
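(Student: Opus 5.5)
The plan is to follow the Koukoulopoulos--Maynard strategy for the Duffin--Schaeffer conjecture \cite{KM2020}: prove positive measure for the limsup set using a quasi-independence-on-average estimate, then upgrade positive measure to full measure with a zero--one law.

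\emph{Setup.} Without loss of generality $\Psi(n)\le 1/2$ and $\alpha\in[0,1)$. For each $n$ let
\[
\mathcal{A}_n=\bigcup_{\substack{0\le a<n\\ \gcd(a,n)=1}}\Bigl(\frac{a+\gamma-\Psi(n)}{n},\,\frac{a+\gamma+\Psi(n)}{n}\Bigr)\ \ (\mathrm{mod}\ 1).
\]
Keeping only residues $a$ coprime to $n$ discards information, but it gives $\lambda(\mathcal{A}_n)\asymp \frac{\varphi(n)}{n}\Psi(n)$. Hence $\sum_n \lambda(\mathcal{A}_n)=\infty$ by \eqref{DivReq}, and the restriction to coprime residues is exactly what lets the Duffin--Schaeffer overlap theory apply. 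Every $\alpha\in\limsup\mathcal{A}_n$ satisfies $\|n\alpha-\gamma\|<\Psi(n)$ infinitely often.

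\emph{Overlap estimate.} By the Chung--Erd\H{o}s (divergence Borel--Cantelli) inequality, it suffices to show that, along a sequence of dyadic blocks $X<n\le Y$ with $\sum_{X<n\le Y}\lambda(\mathcal{A}_n)\asymp 1$, one has
\[
\sum_{X<m,n\le Y}\lambda(\mathcal{A}_m\cap\mathcal{A}_n)\ll \Bigl(\sum_{X<n\le Y}\lambda(\mathcal{A}_n)\Bigr)^2 .
\]
For $m\ne n$ I would count pairs $(a,b)$ with $a$ coprime to $n$ and $b$ coprime to $m$ such that
\[
|am-bn+\gamma(m-n)|\le 2\max\bigl(m\Psi(n),n\Psi(m)\bigr).
\]
Each integer value $h=am-bn$ lying in a window of length $\Delta\asymp\max(m\Psi(n),n\Psi(m))$ is attained by about $\gcd(m,n)$ pairs, and only when $\gcd(m,n)\mid h$. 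The inhomogeneous shift $\gamma(m-n)$ merely translates this window. So the count is at most $\gcd(m,n)\bigl(1+\Delta/\gcd(m,n)\bigr)$, restricted to those $h$ that are compatible with the coprimality constraints. The aim is the inhomogeneous analogue of the Pollington--Vaughan bound:
\[
\lambda(\mathcal{A}_m\cap\mathcal{A}_n)\ll \lambda(\mathcal{A}_m)\lambda(\mathcal{A}_n)\,P(m,n)+\ldots
\]
when $D(m,n)=\max(m\Psi(n),n\Psi(m))/\gcd(m,n)\ge 1$, and $\lambda(\mathcal{A}_m\cap\mathcal{A}_n)=0$ or is negligible otherwise. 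Here $P(m,n)=\prod_{p\mid mn/\gcd(m,n)^2,\ p>D(m,n)}(1+1/p)$.

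\emph{Main obstacle.} This is the step I expect to be hardest. In the homogeneous case the coprimality sieve on $h$ (namely, $h/\gcd(m,n)$ coprime to the relevant parts of $m$ and $n$) produces the factor $P(m,n)$. With the shift, $h$ ranges over a translated window, so the sieve must be run on a shifted interval. I would handle this with a Brun/Selberg upper-bound sieve on an arbitrary interval of length $\Delta/\gcd(m,n)$, which loses only a constant factor and yields the same $P(m,n)$. If this succeeds, the remaining sum
\[
\sum_{m,n}\Psi(m)\Psi(n)\frac{\varphi(m)\varphi(n)}{mn}P(m,n)\,\ds1_{D(m,n)\ge 1}
\]
is identical to the one bounded by Koukoulopoulos and Maynard via GCD graphs, and I would invoke their Proposition directly to conclude quasi-independence on average. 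The one further point to check is whether the terms with $D(m,n)<1$ are still negligible in the inhomogeneous setting: the shifted window might contain a multiple of $\gcd(m,n)$ even when its length is less than $\gcd(m,n)$. These terms contribute at most $\gcd(m,n)/(mn)$ each, so I would bound them by a separate direct summation, using the freedom to pass to a subsequence of blocks.

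\emph{Zero--one law.} Finally, to pass from positive to full measure I would use Gallagher's ergodic argument adapted to the shift. Replace $\Psi$ by $\Psi_\ell(n)=\Psi(n)/\ell$ and apply the above to show the limsup set has positive measure in every interval, uniformly. Then use the Lebesgue density theorem, applying the quasi-independence estimate inside a subinterval $I$, which introduces only an $O(\lambda(I))$ factor for $n$ large. This local version of the Chung--Erd\H{o}s bound gives density bounded below at every point, hence full measure.
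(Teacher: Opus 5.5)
The paper gives no proof of this theorem; it is imported as a black box from \cite{OAI}. Judged on its own terms, your proposal has a fatal gap at the first step.

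By keeping only residues $a$ coprime to $n$, you are trying to prove that $\limsup\mathcal{A}_n$ has full measure. That is the inhomogeneous Duffin--Schaeffer conjecture \emph{with reduced fractions}, and the paper points out that this stronger statement is false (\cite{HMP, HL}): for some $\gamma$ and some $\Psi$ satisfying \eqref{DivReq}, $\limsup\mathcal{A}_n$ is not of full measure. Local quasi-independence on average together with Lebesgue density would give full measure. So the overlap estimate you are aiming for cannot hold for every $\Psi$ satisfying \eqref{DivReq}, however you organise the sieve and the blocks. Your argument also uses no property of $\gamma$, so it has no way to avoid the bad cases. The non-coprime residues are exactly what separate the weak statement from the false reduced one. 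A correct proof must use them, for instance by working with $\{\alpha:\|n\alpha-\gamma\|<\Psi(n)\}$ (measure $2\Psi(n)$) or with some $\gamma$-dependent choice of residues.

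Concretely, the argument breaks at the point you treat as a side check: the pairs with $D(m,n)<1$. In the homogeneous case these pairs contribute nothing, because distinct reduced fractions repel: $|a/n-b/m|\ge\gcd(m,n)/(mn)$. In your language, the window is centred at $0$ and $h=0$ is excluded for $m\ne n$. The shift does not merely translate the window; it destroys this repulsion, and you underestimate how badly.

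Take $\gamma=\tfrac12$, a prime $p>3$, $n=p$, $m=3p$ and $\Psi(p)=\Psi(3p)=\epsilon<\tfrac14$. Then $D(m,n)=3\epsilon<1$, and the corresponding homogeneous sets are disjoint. However, $(a+\tfrac12)/p=(b+\tfrac12)/(3p)$ when $b=3a+1$. This $b$ is coprime to $3p$ for all but one $a\in\{1,\ldots,p-1\}$, so $p-2$ centres coincide and
\[
\lambda(\mathcal{A}_p\cap\mathcal{A}_{3p})\ge\frac{2(p-2)\epsilon}{3p}\gg\lambda(\mathcal{A}_{3p}).
\]
This exceeds $\lambda(\mathcal{A}_p)\lambda(\mathcal{A}_{3p})\asymp\epsilon^2$ by a factor $\asymp 1/\epsilon$.

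Your per-pair bound $\gcd(m,n)/(mn)=1/(3p)$ also fails here once $\epsilon\gg 1/p$. In general a $D<1$ pair is only bounded by $\ll\gcd(m,n)\min\bigl(\Psi(m)/m,\Psi(n)/n\bigr)$. Such terms are invisible to the Koukoulopoulos--Maynard proposition, which only handles the $D\ge 1$ terms. Passing to a subsequence of blocks cannot control them for general $\Psi$, as the cited counterexamples show.
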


Some may find this unnatural, owing to the presence of the Euler totient function but the absence of reduced fractions. However, as alluded to above, the claim is false if only reduced fractions are used. Moreover, as we saw in \cite{CT2024}, this apparently weaker version can be useful. We deduce Theorem \ref{MainThm} from Theorem \ref{WeakInhomDuff} by establishing the divergence of the relevant series.

To secure the requisite totient averaging, we apply the AM--GM inequality. This enables us to exploit the product formula for $\varphi$, as in \cite{Cho2018}.

\subsection*{Notation}

The symbol $p$ is reserved for primes.
We adopt the Vinogradov and Bachmann--Landau notations, as we now describe. If $f$ and $g$ are complex-valued functions, then we write $f \ll g$ or $g \gg f$ or $f = O(g)$ if $|f| \le C|g|$ pointwise, for some constant $C>0$.

\subsection*{Organisation}

We prove Theorem \ref{MainThm} in \S \ref{proof}, and include some further discussion in \S \ref{further}.

\subsection*{Funding} 

Owen Jones is supported by the Warwick Mathematics Institute Centre for Doctoral Training, and gratefully acknowledges funding from the Heilbronn Institute for Mathematical Research and the Engineering and Physical Sciences Research Council under the ``Additional Funding Programme for Mathematical Sciences'' (Grant number EP/V521917/1).


\subsection*{AI declaration}

AI has not been used in this work, aside from that we have invoked Theorem \ref{WeakInhomDuff}.

\subsection*{Rights}

For the purpose of open access, the authors have applied a Creative Commons Attribution (CC-BY) licence to any Author Accepted Manuscript version arising from this submission.

\section{Proof of Theorem \ref{MainThm}}
\label{proof}

Throughout this section, we fix $\psi, k$, $n_1,\ldots,n_k$ and $p_1,\ldots,p_k$ as in Theorem~\ref{MainThm}. A quantity will be called small/large if it is small/large in terms of these fixed parameters. Implicit constants will also be allowed to depend on them. We seek to apply Theorem \ref{WeakInhomDuff} with 
\[
\Psi(n)=\frac{\psi(n)}{|n-n_1|_{p_1}\cdots |n-n_k|_{p_k}}.
\]

Our task is to establish \eqref{DivReq}. We do so by working dyadically.
We partition a subset of $(N,2N] \cap \bZ$ according to the shifted $p_j$-adic valuations, and later sum over all sets in the partition. More precisely, we define, for each $\bl= \{ \ell_1,\ldots,\ell_k\} \in \bN^k$,
the set 
\[
B_{\bl}(N):= \{ n \in (N, 2N] \cap \bZ: p_j^{\ell_j} \| (n - n_j) \text{ for all } j \}.
\]
It will suffice to consider $\bl$ from the set
\[
\mathcal{W}(N):= \{ \bl \in \bN^k: p_j^{\ell_j} \le N^{\frac{1}{2k}} \text{ for all } j \}.
\]
The key technical ingredient is that $\varphi(n)/n$ is bounded below on average over each of these sets $B_{\bl}(N)$.

\begin{lem}
Let $N \in \mathbb{N}$ be large. Then,
for each $\bl \in \mathcal{W}(N)$,
\[
\sum_{n \in B_{\bl}(N)} \frac{\varphi(n)}{n} \gg \frac{N}{p_1^{\ell_1}\cdots p_{k}^{\ell_k}}.
    \]
\end{lem}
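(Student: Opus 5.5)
The plan is to combine a lower bound for $\# B_{\bl}(N)$ with the AM--GM inequality applied to the product formula $\varphi(n)/n=\prod_{p\mid n}(1-1/p)$. Write $M=p_1^{\ell_1}\cdots p_k^{\ell_k}$, so that $M\le N^{1/2}$ for $\bl\in\mathcal W(N)$.

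\emph{Step 1: size of $B_{\bl}(N)$.} The condition $p_j^{\ell_j}\,\|\,(n-n_j)$ says that $n-n_j$ lies in one of $p_j-1$ residue classes modulo $p_j^{\ell_j+1}$. By the Chinese remainder theorem, $B_{\bl}(N)$ is therefore the set of $n\in(N,2N]$ lying in $\prod_j(p_j-1)$ residue classes modulo $pM$, where $p=p_1\cdots p_k$. Counting each class in an interval of length $N$ gives
\[
\# B_{\bl}(N)=\frac{N\prod_j(p_j-1)}{p_1\cdots p_k\,M}+O(1).
\]
Since $N/M\ge N^{1/2}$ is large, the error term is negligible, and so $\# B_{\bl}(N)\gg N/M$.

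\emph{Step 2: AM--GM.} Write $B=B_{\bl}(N)$. By AM--GM,
\[
\frac1{\# B}\sum_{n\in B}\frac{\varphi(n)}n\ \ge\ \exp\Bigl(\frac1{\# B}\sum_{n\in B}\ \sum_{p\mid n}\log(1-1/p)\Bigr).
\]
It therefore suffices to show that
\[
S:=\sum_{n\in B}\sum_{p\mid n}\frac1p \ll \# B,
\]
because $-\log(1-1/p)\le 2/p$ for every prime $p$. Swapping the order of summation gives
\[
S=\sum_{p\le 2N}\frac1p\,\#\{n\in B: p\mid n\}.
\]

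\emph{Step 3: bounding $S$.} We split the sum over $p$ according to whether $p$ divides $M$.
\begin{itemize}
\item For the boundedly many primes $p\in\{p_1,\dots,p_k\}$, we bound the count trivially by $\# B$. These primes contribute $O(\# B)$.
\item For every other prime, every $n\in B$ satisfies $n\equiv n_j \bmod p_j^{\ell_j}$ for all $j$, so $B$ lies in a single residue class modulo $M$. The extra condition $p\mid n$ then confines $n$ to a single class modulo $pM$, because $\gcd(p,M)=1$. Hence $\#\{n\in B:p\mid n\}\le N/(pM)+1$.
\end{itemize}
Summing the second bound over primes gives
\[
\sum_{p\le 2N}\frac1p\Bigl(\frac N{pM}+1\Bigr)\ll \frac NM+\log\log N.
\]
We have $N/M\ll\# B$ by Step~1. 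Also $\log\log N\ll N^{1/2}\le N/M\ll \# B$. Hence $S\ll\# B$.

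\emph{Conclusion.} Step~3 shows that the average of $\varphi(n)/n$ over $B$ is $\ge e^{-O(1)}$. Combining this with Step~1 gives $\sum_{n\in B}\varphi(n)/n\gg\# B\gg N/M$, which is the claim.

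The only delicate point is the error term $+1$ in the count for each prime in Step~3. Summed over all primes up to $2N$ it produces $\log\log N$, and this is harmless precisely because the restriction $p_j^{\ell_j}\le N^{1/(2k)}$ defining $\mathcal W(N)$ forces $\# B\gg N^{1/2}$.
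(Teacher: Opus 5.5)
Your proof is correct and follows essentially the same route as the paper: AM--GM applied to the product formula for $\varphi(n)/n$, a trivial bound for the boundedly many exceptional primes, and the count $N/(pM)+1$ for every other prime, with the $+1$ errors absorbed via $M\le N^{1/2}$. The paper normalises this last step as $\alpha_p\ll p^{-1/2}$ and sums $p^{-3/2}$, while you sum $1/p$ to get $\log\log N\ll N^{1/2}$, which is equivalent; the only blemish is that you use $p$ both for $p_1\cdots p_k$ and for a generic prime.
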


We take the approach of \cite[Lemma 3.1]{Cho2018}, which concerns totient averaging in Bohr sets.

\begin{proof}
With $P := P(\bl):= p_1^{\ell_1}\cdots p_k^{\ell_k} \le \sqrt N$, let $n_0 = n_0(\bl)$ be an integer such that
\[
n_0 \equiv n_j \mmod p_j^{\ell_j + 1}
\qquad (1 \le j \le k).
\]
This ensures that, for $j=1,2,\ldots,k$,
\[
p_j^{\ell_j} \| (n - n_j)
\quad
\Longleftrightarrow
\quad
p_j^{\ell_j} \| (n - n_0).
\]
Thus, with $q = p_1 \cdots p_k \ll 1$, 
\begin{align*}
\sum_{n \in B_\bl(N)}\frac{\varphi(n)}{n} = \sum_{\substack{ N < n_0 + mP \le 2N
\\(m,q) = 1}} \frac{\varphi(n_0 + mP)}
{n_0 + mP}
= \sum_{\substack{ N < n_0 + mP \le 2N
\\(m,q) = 1}} 
\
\prod_{p \mid (n_0 + mP)} \left(1 - \frac 1 p \right).
\end{align*}
Since $N/P$ is large and $\varphi(q)/q \gg 1$, there are $\gg N/P$ many summands. Hence it suffices, by the AM--GM inequality \cite[Ch. 2]{Ste2004}, to prove that
\begin{equation}
\label{stp}
\prod_{p \le 2N} \left(1 - \frac 1 p \right)^{\alp_p} \gg 1,
\end{equation}
where
\begin{align*}
\alp_p = \frac{P}{N} \# \{ m \in \bZ: (m,q) = 1, \quad p \mid (n_0 + mP) \in (N,2N]
\}.
\end{align*}

Observe that
\[
\prod_{p \le q} \left(1 - \frac 1 p \right)^{\alp_p} \gg 1.
\]
For $p > q$, we have
\[
\alp_p \le \frac{P}{N} \left( \frac{N}{Pp} + 1 \right) \le p^{-1} + N^{-1/2} \ll p^{-1/2}.
\]
Now
\[
-\sum_{p \le 2N} \alp_p \log(1-1/p) \ll \sum_p p^{-3/2} \ll 1,
\]
which delivers \eqref{stp} and completes the proof of the lemma.
\end{proof}

The following calculation yields \eqref{DivReq} and completes the proof of the theorem:
\begin{align*}
\sum_{N < n \le 2N} \frac{\varphi(n) \psi(n)}{n|n-n_1|_{p_1}\cdots |n-n_k|_{p_k}} &\ge \sum_{\bl \in  \mathcal{W}(N)}  \
\sum_{n \in B_{\bl}(N)} \frac{\varphi(n)\psi(n)}{np_1^{-\ell_1} \cdots p_k^{-\ell_k}}\\ &\ge p_1^{\ell_1} \cdots p_k^{\ell_k} \psi(2N) \sum_{\bl \in  \mathcal{W}(N)}  \
\sum_{n \in B_{\bl}(N)}\frac{\varphi(n)}{n}
    \\& \gg p_1^{\ell_1} \cdots p_k^{\ell_k} \psi(2N) (\log N)^k \frac{N}{p_1^{\ell_1}\cdots p_k^{\ell_k}} \\
    &\gg \sum_{2N<n \le 4N} \psi(n) (\log n)^k.
\end{align*}

\section{Further discussion}
\label{further}

\subsection{An alternate proof}

The weak inhomogeneous Duffin--Schaeffer conjecture is a deep result, whose proof we have barely begun to digest. The second author was working on an independent proof of our main theorem, and hopes to publish that upon completion.

\subsection{Another generalisation}

The mixed Littlewood conjecture generalises the $p$-adic Littlewood conjecture (for one prime) by replacing $(p^\ell)_{\ell=1}^\infty$ with $(d_\ell)_{\ell=1}^\infty$ for some increasing sequence of positive integers $d_1  \mid d_2 \mid \ldots$.  Denoting the sequence $(d_\ell)_{\ell=1}^\infty$ by $\cD$, we define the $\cD$-adic pseudo-absolute value $|\cdot|_\cD$ by
\[
|n|_\cD = \min \{ d_\ell^{-1}: d_\ell \mid n \}
\qquad (n \in \bN).
\]

\begin{conj}
[Mixed inhomogeneous Littlewood]
Let $\alp \in \bR$ be irrational, and let $\cD = (d_\ell)_{\ell = 1}^\infty$ be an increasing sequence of positive integers such that $d_\ell \mid d_{\ell + 1}$ for all $\ell$. Then, for all $n_0 \in \bZ$ and all $\gam \in \bR$,
\[
\liminf_{n \to \infty} n 
| n - n_0|_\cD 
\| n \alp - \gam \| = 0.
\]
\end{conj}

This motivates the following hypothetical generalisation of Theorem \ref{MainThm}.
See also \cite[Theorem 3]{HH2013}, which sheds some light on the homogeneous version.

\begin{conj}
[Mixed inhomogeneous Gallagher]
\label{MIG}
Let $\cD =(d_\ell)_{\ell = 1}^\infty$ be an increasing  sequence of positive integers such that $d_\ell \mid d_{\ell+1}$ for all $\ell$. Let $n_0 \in \bZ$ and $\gam \in \bR$, and let $\psi: \bN \to [0, \infty)$ be a non-increasing function such that
\[
\sum_{n=1}^\infty
\psi(n) \max \{ k: d_k \le n \}
= \infty.
\]
Then, for almost all $\alp \in \bR$, there exist infinitely many $n \in \bN$ such that
\[
|n - n_0|_\cD \| n \alp - \gam \| < \psi(n).
\]
\end{conj}

Our approach does not seem to handle this mixed setting in general since, for example,
\[
\frac{\varphi(\ell!)}{\ell!} \to 0 \qquad (\ell \to \infty).
\]
We note, however, that a `partially inhomogeneous' generalisation of \cite[Theorem 3]{HH2013} can be inferred from Theorem \ref{WeakInhomDuff}.

\begin{thm}
Let $\cD = (d_\ell)_{\ell = 1}^\infty$ be an increasing sequence of positive integers such that $d_\ell \mid d_{\ell+1}$ for all $\ell$ and, for all $L \in \bN$,
\[
\sum_{\ell \le L} \frac{\varphi(d_\ell)}{d_\ell} \gg L.
\]
Let $\gam \in \bR$, and let $\psi: \bN \to [0, \infty)$ be a non-increasing function such that 
\[
\sum_{n=1}^{\infty}\psi(n) \max \{ k: d_k \le n \} = \infty.
\]
Then, for almost all $\alp \in \bR$, there exist infinitely many $n \in \bN$ such that
\[
|n |_\cD \| n \alp - \gam \| < \psi(n).
\]
\end{thm}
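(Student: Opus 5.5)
We apply Theorem~\ref{WeakInhomDuff} with
\[
\Psi(n) = \frac{\psi(n)}{|n|_\cD},
\]
so that $\|n\alp-\gam\|<\Psi(n)$ is precisely the desired inequality. It then suffices to establish \eqref{DivReq}, namely that $\sum_n \frac{\varphi(n)}{n}\Psi(n)$ diverges. Since $d_\ell \mid d_{\ell+1}$, for each $n$ the set $\{\ell : d_\ell \mid n\}$ is an initial segment. Hence $|n|_\cD^{-1} = d_{L(n)}$, where $L(n)$ is the largest $\ell$ with $d_\ell\mid n$ (and $|n|_\cD^{-1}=1$ if no such $\ell$ exists; one may harmlessly set $d_0=1$).

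\textbf{Dyadic decomposition.} We would work on $(N,2N]$ and partition by the exact level: let $B_\ell(N)$ be the set of $n\in(N,2N]$ with $d_\ell\mid n$ but $d_{\ell+1}\nmid n$, restricted to those $\ell$ with $d_\ell\le \sqrt N$, say. Writing $n=md_\ell$, we get $\varphi(n)/n \ge \frac{\varphi(d_\ell)}{d_\ell}\cdot\frac{\varphi(m)}{m}$, since $\varphi(ab)/(ab)\ge \varphi(a)\varphi(b)/(ab)$. This is the point where homogeneity ($n_0=0$) matters: it makes $d_\ell$ divide $n$, so the factor $\varphi(d_\ell)/d_\ell$ splits off cleanly, and this is where the hypothesis on $\cD$ will enter. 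The condition $d_{\ell+1}\nmid n$ says that $m \not\equiv 0 \bmod r_\ell$, where $r_\ell=d_{\ell+1}/d_\ell\ge2$. This removes at most half of the $m\in(N/d_\ell,2N/d_\ell]$. We then need
\[
\sum_{\substack{N/d_\ell<m\le 2N/d_\ell\\ r_\ell\nmid m}}\frac{\varphi(m)}{m}\gg \frac N{d_\ell}.
\]
The full sum over $m$ is $\frac{6}{\pi^2}\frac N{d_\ell}+O(\log N)$. The removed terms contribute at most $\#\{m: r_\ell\mid m\}\le \frac{N}{d_\ell r_\ell}+1\le \frac{N}{2d_\ell}+1$, which exceeds $\frac6{\pi^2}\cdot\frac N{d_\ell}$ when taken naively, so the crude bound fails. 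Instead we would bound the removed part more carefully, either via $\sum_{m\le X,\, r\mid m}\varphi(m)/m \le \frac{\varphi(r)}{r}\cdot\frac Xr\cdot C$ using multiplicativity, or simply by the AM--GM argument of the preceding lemma restricted to $m$ coprime to $r_\ell$ (or $m\equiv1\bmod r_\ell$). The AM--GM route seems cleanest. Take $m\equiv 1\bmod r_\ell$, giving $\gg N/(d_\ell r_\ell)$ terms, and bound $\prod_p(1-1/p)^{\alp_p}$ as in the lemma. However, that loses a factor $r_\ell$, which is unacceptable if $r_\ell$ is large. So we would take $m$ coprime to $r_\ell$ instead when $r_\ell$ has few prime factors. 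Handling large $r_\ell$ uniformly is the main technical obstacle. The fallback is the elementary estimate
\[
\sum_{X<m\le 2X,\ r\mid m}\frac{\varphi(m)}m\le \frac{X}{r}+1,
\]
combined with the stronger lower bound $\sum_{X<m\le2X}\varphi(m)/m \ge \frac6{\pi^2}X - O(\log X)$. This handles $r_\ell\ge 3$, since $\frac13<\frac6{\pi^2}$. For $r_\ell=2$, we restrict to odd $m$ and use $\sum_{m \text{ odd}}\varphi(m)/m=\frac{4}{\pi^2}X+o(X)$, which is a standard computation.

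\textbf{Summation.} Granting this, on $(N,2N]$ we obtain
\[
\sum_{N<n\le2N}\frac{\varphi(n)\psi(n)}{n|n|_\cD}\ \ge\ \psi(2N)\sum_{\ell:\, d_\ell\le\sqrt N} d_\ell\cdot\frac{\varphi(d_\ell)}{d_\ell}\cdot c\frac N{d_\ell}\ \gg\ N\psi(2N)\sum_{\ell\le L(\sqrt N)}\frac{\varphi(d_\ell)}{d_\ell},
\]
where $L(x)=\max\{k:d_k\le x\}$. The hypothesis then gives $\gg N\psi(2N)L(\sqrt N)$. Since the $d_\ell$ at least double, $L(\sqrt N)\ge L(N)-L'$, where $L'$ counts those $\ell$ with $\sqrt N<d_\ell\le N$. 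This can be comparable to $L(N)$, so we would rather use $d_\ell\le N^{1/2}$ replaced by $d_\ell\le N/C$ for a large constant $C$, keeping $N/d_\ell$ large. This gives $L(N/C)\ge L(N)-\log_2 C$. Hence the dyadic block is $\gg N\psi(2N)L(N)\gg\sum_{2N<n\le4N}\psi(n)L(n)$ for large $N$ (using $L(4N)\le L(N)+2$). Summing over dyadic $N$ yields \eqref{DivReq}, and Theorem~\ref{WeakInhomDuff} finishes the proof. The obstacle, again, is the uniform lower bound for the average of $\varphi(m)/m$ over $m$ in a short dyadic range avoiding multiples of $r_\ell$, when $N/d_\ell$ is only a large constant. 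For this we would use the AM--GM estimate from the lemma, with $q$ replaced by the primes dividing $r_\ell$ up to a threshold.
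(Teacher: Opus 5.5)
Your proposal is correct, and its skeleton is the paper's. It uses the same $\Psi$, the same level sets (your $B_\ell(N)$ are the paper's $\cB_\ell(N)$), the splitting $\varphi(md_\ell)\ge\varphi(m)\varphi(d_\ell)$, and the same summation via the hypothesis on $\cD$ and Cauchy condensation. Your cutoff $d_\ell\le N/C$ does the job of the paper's $\ell\le L/2$.

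The difference lies in how you prove
$\sum_{X<m\le 2X,\ r_\ell\nmid m}\varphi(m)/m\gg X$, where $X=N/d_\ell$.

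\textbf{Your route.} Your ``fallback'' is already a complete proof, uniform in $r_\ell$ once $C$ is a large absolute constant. It takes the mean value $\frac{6}{\pi^2}X+O(1+\log X)$, with absolute implied constant, and subtracts the trivial bound $X/r_\ell+1$ for the removed terms. So the obstacle you raise again at the end is not real, and the closing AM--GM-with-threshold suggestion is unnecessary. You also dismissed the crude bound by an arithmetic slip: $\frac12<\frac{6}{\pi^2}\approx 0.608$. The plain subtraction therefore already works for $r_\ell=2$, and the odd-$m$ computation, though correct, is superfluous.

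\textbf{The paper's route.} The paper applies AM--GM directly to all $m\in(X,2X]$ with $r_\ell\nmid m$. There are $\gg X$ such $m$, and whatever $r_\ell$ is,
$\alp_p\le\frac{d_\ell}{N}\#\{m\le 2X:\ p\mid m\}\le 2/p$.
Hence $-\sum_p\alp_p\log(1-1/p)\ll\sum_p p^{-2}$. No restriction such as $m\equiv1\bmod r_\ell$ or $(m,r_\ell)=1$ is needed, so your worry about losing a factor of $r_\ell$ disappears. The coprimality condition in the earlier lemma is forced by the requirement $p_j^{\ell_j}\|(n-n_0)$, not by the AM--GM method.

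\textbf{Comparison.} Your argument is standard and self-contained. However, it rests on the numerical fact that the removed density $1/r_\ell\le\frac12$ lies below $\frac{6}{\pi^2}$. The AM--GM argument uses no asymptotics or explicit constants, and it works whenever the retained set has positive density in the dyadic range.
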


\begin{proof}
Let $N \in \bN$ be large in terms of the sequence $\cD$, and let $L \in \bN$ be maximal such that $d_L \le 2N$. Then $L$ is also large.
Define, for $\ell \in \bN$,
\[
\cB_\ell(N) = \{ n \in \bZ: N < n \le 2N, \quad
d_\ell \mid n, \quad d_{\ell + 1} \nmid n \}.
\]
Note that if $\ell \le L/2$ then
\[
\frac{2N}{d_{\ell+1}} \ge
\frac{d_L}{d_{\ell+1}} \ge 2^{L - \ell - 1} \ge 2^{(L-2)/2} \ge 100.
\]
We claim that, for each $\ell \le L/2$,
\[
\sum_{n \in \cB_\ell (N)} \frac{\varphi(n)}{n} \gg
\frac{\varphi(d_\ell) N}{d_\ell^2}.
\]

To prove the claim, we use the fact that $\varphi(xy) \ge \varphi(x) \varphi(y)$ for all $x, y \in \bN$. Writing $r_\ell = d_{\ell+1}/d_\ell$,
\begin{align*}
\sum_{n \in \cB_\ell (N)} \frac{\varphi(n)}{n} \ge \frac{\varphi(d_\ell)}{d_\ell}
\
\sum_{\substack{N/d_\ell < m \le 2N/d_\ell \\ r_\ell \nmid m}} \frac{\varphi(m)}{m}.
\end{align*}
There are $\gg N/d_\ell$ many summands so, by the AM--GM inequality, it suffices for the claim to prove that
\begin{equation}
\label{stp2}
\prod_{p} \left(1 - \frac1p \right)^{\alp_p} \gg 1,
\end{equation}
where now we define
\[
\alp_p = \frac{d_\ell}{N}
\# \{ m \in \bZ: r_{\ell} \nmid m,
\ p \mid m \in (N/d_\ell, 2N/d_\ell] \}.
\]
Since 
\[
\alp_p \le \frac{d_\ell}{N}
\# \{ m \in \bZ:
p \mid m \in [1, 2N/d_\ell] \}
\le 2/p
\]
for all primes $p$, we have
\[
- \sum_p \alp_p \log(1- 1/p) \ll \sum_p p^{-2} \ll 1,
\]
which furnishes \eqref{stp2} and with it the claim.

With the claim at hand, we compute that
\begin{align}
\notag
\sum_{N < n \le 2N} \frac{\varphi(n) \psi(n)}{n | n |_\cD} &\ge \sum_{\ell \le L/2} \
d_\ell \psi(2N)
\sum_{n \in \cB_\ell(N)} 
\frac{\varphi(n)}{n} \\
\label{KeyAvg}
&\gg N \psi(2N) \sum_{\ell \le L/2} \frac{\varphi(d_\ell)}{d_\ell} \gg N \psi(2N)L.
\end{align}
By the Cauchy condensation test,
\[
\sum_{m=1}^\infty 2^m \psi(2^m)  \max \{ k: d_k \le 2^m \} = \infty.
\]
In particular, by summing \eqref{KeyAvg} over large $N = 2^{m-1}$, we find
\[
\sum_{n=1}^\infty \frac{\varphi(n) \psi(n)}{n | n |_\cD} = \infty.
\]
The desired conclusion now follows upon applying Theorem \ref{WeakInhomDuff}.
\end{proof}

\end{document}